\def\dist{\operatorname{dist}}
\def\N{\mathbb N}
\def\eps{\varepsilon}
\def\lin{\operatorname{span}}
\def\conv{\operatorname{co}}
\def\ienv{\operatorname{(I)-env}}
\def\dw{{\delta_w}}
\def\dws{{\delta_{w^*}}}
\def\cl#1#2{\overline{#2}^{#1}}
\def\trn#1{{\left\vert\kern-0.3ex\left\vert\kern-0.3ex\left\vert\hskip 0,1 mm #1\hskip 0,1 mm\right\vert\kern-0.3ex\right\vert\kern-0.3ex\right\vert}}
\newtheorem{theorem}{Theorem}[section]
\newtheorem{proposition}[theorem]{Proposition}
\newtheorem{lemma}[theorem]{Lemma}
\newtheorem{corollary}[theorem]{Corollary}
\theoremstyle{definition}
\newtheorem*{definition}{Definition}
\begin{document}

\title{Quantitative Grothendieck property}
\author{Hana Bendov\'a}

\address{Department of Mathematical Analysis \\
Faculty of Mathematics and Physics\\ Charles University in Prague\\
Sokolovsk\'{a} 83, 186 \ 75\\Praha 8, Czech Republic}

\email{bendova@karlin.mff.cuni.cz}

\subjclass[2010]{46B26,46B04,46A20}
\keywords{Grothendieck property; quantitative Grothendieck property; (I)-envelope}

\thanks{The research was supported by the Grant No. 142213/B-MAT/MFF of the Grant Agency of the Charles University in Prague
and by the Research grant GA\v{C}R P201/12/0290.}

\begin{abstract}
A Banach space $X$ is Grothendieck if the weak and the weak$^*$ convergence of sequences in the dual space $X^*$ coincide. The space $\ell^\infty$ is a classical example of a Grothendieck space due to Grothendieck. We introduce a quantitative version of the Grothendieck property, we prove a quantitative version of the above-mentioned Grothendieck's result and we construct a Grothendieck space which is not quantitatively Grothendieck. We also establish the quantitative Grothendieck property of $L^\infty(\mu)$ for a $\sigma$-finite measure $\mu$.
\end{abstract}

\maketitle


\section{Introduction}
A Banach space $X$ is said to be \emph{Grothendieck} if the weak and the weak$^*$ convergence of sequences in the dual space $X^*$ coincide.
The space $\ell_\infty$ is a classical example of a Grothendieck space due to Grothendieck \cite{Grothendieck}. Some other examples are $C(K)$ where $K$ is an $F$-space \cite{Seever}, weak $L^p$ spaces \cite{Lotz}, and the Hardy space $H^{\infty}$ \cite{Bourgain}. R. Haydon has constructed a Grothendieck space which does not contain $\ell_\infty$ \cite{Haydon}.

In this paper, we introduce a quantitative version of the Grothendieck property. Our inspiration comes from many recent quantitative results. Quite a few properties and theorems have been given a quantitative form lately. Let us mention quantitative versions of Krein's theorem \cite{FHMZ, Granero, GHM, CMR}, quantitative versions of the Eberlein-\v Smulyan and the Gantmacher theorem \cite{AC}, quantitative version of James' compactness theorem \cite{CKS, GHP}, quantitative weak sequential continuity and quantitative Schur property \cite{KPS, KS}, quantification of Dunford Pettis \cite{KK} and reciprocal Dunford-Pettis property \cite{KS2}.

The definition of the Grothendieck property can be rephrased as follows. A~Banach space $X$ is Grothendieck if every weak$^*$ Cauchy sequence in $X^*$ is weakly Cauchy.
The quantitative version is derived from this formulation in the following way.
Let $X$ be a Banach space and $(x^*_n)$ be a bounded sequence in $X^*$. Define
$$ \dw(x^*_n) = \sup_{x^{**}\in B_{X^{**}}} \inf_{n\in\N} \sup_{k,l\geq n} |x^{**}(x^*_k) - x^{**}(x^*_l)|$$
the ``measure of weak non-cauchyness'' of the sequence $(x^*_n)$, and
$$\dws(x^*_n) = \sup_{x\in B_X} \inf_{n\in\N} \sup_{k,l\geq n} |x^*_k(x) - x^*_l(x)|$$
the ``measure of weak$^*$ non-cauchyness'' of the sequence $(x^*_n)$.
The quantities $\dw(x^*_n)$ and $\dws(x^*_n)$ are equal to zero if and only if the sequence $(x^*_n)$ is weakly and weak$^*$ Cauchy, respectively.
We now replace the implication in the definition of the Grothendieck property by an inequality between these two quantities, which is a stronger condition.

\begin{definition}[quantitative Grothendieck property]
Let $c\geq 1$. A Banach space $X$ is \emph{$c$-Grothendieck} if
$$\dw(x^*_n) \leq c \dws(x^*_n)$$
whenever $(x^*_n)$ is a bounded sequence in $X^*$.
\end{definition}
Section 2 establishes the relation between the quantitative Grothendieck property and (I)-envelopes of unit balls. It is then used to prove the following quantitative version of the above-mentioned Grothendieck's result.

\begin{theorem}
\label{q-groth}
The space $\ell_\infty$ is 1-Grothendieck.
\end{theorem}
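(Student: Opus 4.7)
The inequality $\dws(x_n^*) \leq \dw(x_n^*)$ is automatic from the canonical embedding $B_X \hookrightarrow B_{X^{**}}$, so I need only the reverse inequality $\dw(x_n^*) \leq \dws(x_n^*)$. The plan is to invoke the result of Section~2, which reformulates the $c$-Grothendieck property as a geometric inclusion of $B_{X^{**}}$ into the (I)-envelope of $B_{X^*}$ (with constant $c$); for $c=1$ this amounts to showing $B_{\ell_\infty^{**}} \subseteq \ienv(B_{\ell_\infty^*})$. That such an inclusion can be hoped to hold with the sharp constant $c=1$ already reflects the abundance of $\{0,1\}$-valued functionals in $B_{\ell_\infty}$.

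To verify the inclusion I use the classical identification $\ell_\infty^* \cong ba(\N)$, the space of bounded finitely additive signed measures on~$\N$. A key point is that $B_{\ell_\infty}$ contains all indicators $\chi_A$ for $A \subseteq \N$, so $\dws(x_n^*)$ already controls $\inf_n \sup_{k,l\geq n} |x_k^*(\chi_A) - x_l^*(\chi_A)|$ uniformly in $A$; the task is to promote this $\chi_A$-control to control against arbitrary $x^{**} \in B_{\ell_\infty^{**}}$. Starting from a hypothetical violation of $\dw \leq \dws$ by some $\eps>0$, I extract a ``difference sequence'' $y_k^* = x_{n_k}^* - x_{m_k}^*$ satisfying $|x^{**}(y_k^*)| > \dws(x_n^*) + \eps$ for all $k$, while $\limsup_k |y_k^*(x)| \leq \dws(x_n^*)$ for every $x \in B_{\ell_\infty}$ (the second property coming directly from the definition of $\dws$).

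The workhorse is Phillips' lemma (equivalently, Rosenthal's disjointization lemma) for bounded sequences in $ba(\N)$: after passing to a further subsequence I can find pairwise disjoint sets $E_k \subseteq \N$ on which the measures $y_k^*$ are essentially concentrated. Testing against a signed sum $x = \sum_k \sigma_k \chi_{E_k} \in B_{\ell_\infty}$ with signs $\sigma_k \in \{\pm 1\}$ chosen to align with $y_k^*(E_k)$, I bound the $y_k^*$-oscillations from below by essentially $|y_k^*(E_k)|$; relating these lower bounds to $|x^{**}(y_k^*)|$ contradicts the $\eps$-violation. The main obstacle is quantitative sharpness: the $\eps$-losses from the disjointization step and from the (I)-envelope criterion must combine additively (absorbed into $\eps$) rather than multiplicatively, ensuring a final constant of exactly $1$ rather than a larger absolute constant. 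This works for $\ell_\infty$ precisely because $B_{\ell_\infty}$ contains enough $\{0,1\}$-valued test functions to witness essentially everything that $x^{**}$ can detect on disjointly supported measures.
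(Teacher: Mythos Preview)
Your high-level route matches the paper's: invoke Proposition~\ref{groth-ienv} to reduce the $1$-Grothendieck property to the inclusion $B_{\ell_\infty^{**}}\subseteq\ienv(B_{\ell_\infty})$ (a small slip: the envelope is of $B_X$, not of $B_{X^*}$ as you twice write), and then establish that inclusion. The paper does not prove the inclusion itself; it simply cites Kalenda \cite[Example~4.1]{Kalenda1}. You instead sketch a direct argument via Phillips/Rosenthal, so you are attempting strictly more than the paper does at this point.

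That sketch has a genuine gap at its core. You assert that ``after passing to a further subsequence I can find pairwise disjoint sets $E_k\subseteq\N$ on which the measures $y_k^*$ are essentially concentrated'', but neither Phillips' nor Rosenthal's lemma yields this. Rosenthal's lemma starts from \emph{given} disjoint sets and thins the measure sequence so that each $|y_k^*|$ is small on the union of the others' sets; it does not manufacture sets carrying most of the mass of each $y_k^*$, and in general no such sets exist (the differences $y_k^*=x_{n_k}^*-x_{m_k}^*$ may well share a large common component, e.g.\ when $x_n^*=(-1)^n\mu$). Even granting concentration, your test vector $x=\sum_k\sigma_k\chi_{E_k}$ detects only $|y_k^*(E_k)|$, whereas what dominates $|x^{**}(y_k^*|_{E_k})|$ for an arbitrary $x^{**}\in B_{\ell_\infty^{**}}$ is the total variation $|y_k^*|(E_k)$; these can differ drastically, so signed indicators are not enough. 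The phrase ``relating these lower bounds to $|x^{**}(y_k^*)|$'' therefore hides the entire difficulty. Kalenda's argument (whose structure the paper recalls in Section~4) does employ disjointification, but threads it through several intermediate propositions precisely to handle these issues; a one-step appeal to Phillips/Rosenthal does not deliver the sharp constant~$1$.
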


If $X$ is $c$-Grothendieck for some $c\geq 1$, then it is Grothendieck. In section 3 we show that the converse is not true.
\begin{theorem}
\label{groth!=qgroth}
There is a Grothendieck space which is not $c$-Grothendieck for any $c\geq 1$.
\end{theorem}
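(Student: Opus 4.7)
My plan is to exhibit the space as an $\ell_\infty$-direct sum $X=(\bigoplus_{n\in\N} X_n)_{\ell_\infty}$, where each summand $X_n$ is Grothendieck but witnesses a quantitative failure that becomes worse as $n$ grows. The strategy is to push all the ``bad'' sequences into separate coordinate blocks and then to show that the $\ell_\infty$-gluing still preserves the qualitative Grothendieck property.

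\textbf{Step 1 (building blocks).} For each $n\in\N$ I would produce a Grothendieck Banach space $X_n$ together with a bounded sequence $(x_{n,k}^*)_k\subset X_n^*$ such that
\[
\dw(x_{n,k}^*)\ge 1 \qquad \text{while} \qquad \dws(x_{n,k}^*)\le 1/n.
\]
Natural candidates are spaces of the form $C(K_n)$ for suitably chosen compact $F$-spaces $K_n$, or quotients $\ell_\infty/Y_n$. The existence of such $X_n$ is exactly a quantified failure of being $n$-Grothendieck, and the (I)-envelope calculus developed in Section~2 is the tool that lets one realise such a gap by engineering sets whose (I)-envelope fills only a small portion of the bidual ball.

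\textbf{Step 2 (gluing).} Form $X=(\bigoplus_n X_n)_{\ell_\infty}$. Its dual identifies isometrically with $(\bigoplus_n X_n^*)_{\ell_1}$, so each $X_n^*$ embeds as the $n$-th block of $X^*$, and the measures $\dw,\dws$ of a sequence supported in a single block are the same whether computed in $X_n^*$ or in $X^*$.

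\textbf{Step 3 (Grothendieck property of $X$).} I would show that every weak$^*$ null sequence $(\xi_k)\subset X^*$ is weakly null (weak$^*$ Cauchy reduces to this by passing to differences). Using the $\ell_1$-block decomposition $\xi_k=\sum_n\xi_k^{(n)}$ with $\xi_k^{(n)}\in X_n^*$, boundedness forces the tails $\sum_{n>N}\|\xi_k^{(n)}\|$ to be small uniformly in $k$ after choosing $N$ large, while each block sequence $(\xi_k^{(n)})_k$ is weak$^*$ null in $X_n^*$ and hence (by Grothendieckness of $X_n$) weakly null. A standard $\eps/3$-and-diagonal argument then yields weak convergence to $0$ of $(\xi_k)$ in $X^*$.

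\textbf{Step 4 (witness of quantitative failure).} Embed $(x_{n,k}^*)_k$ into $X^*$ as its $n$-th block. By Step~2 the measures of non-Cauchyness are preserved, so this bounded sequence has $\dw\ge 1$ and $\dws\le 1/n$, contradicting $n$-Grothendieckness. Since $n$ is arbitrary, $X$ cannot be $c$-Grothendieck for any $c\ge 1$.

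\textbf{Main obstacle.} Step~3 is the delicate one: an arbitrary $\ell_\infty$-sum of Grothendieck spaces need not be Grothendieck, so the $X_n$ from Step~1 must be chosen with enough structure for the sum theorem to go through. If the $X_n$ are realised as $C(K_n)$, one can try to assemble the $K_n$ into a single compact $F$-space $K$ with $X\cong C(K)$ and appeal to Seever's theorem, sidestepping a direct sum-theorem argument. A subsidiary difficulty is Step~1 itself: one must genuinely produce Grothendieck spaces whose quantitative constants exceed any prescribed $n$, which is where the (I)-envelope characterisation from Section~2 enters in an essential way.
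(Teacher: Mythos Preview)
Your overall architecture---build Grothendieck blocks $X_n$ failing $n$-Grothendieck and sum them---is exactly the paper's strategy. But two technical choices in your proposal break the argument.

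\medskip
\textbf{The dual identification in Step~2 is false.} You assert that $\bigl((\bigoplus_n X_n)_{\ell_\infty}\bigr)^*\cong(\bigoplus_n X_n^*)_{\ell_1}$. This is the \emph{predual}, not the dual: the correct identity is $\bigl((\bigoplus_n X_n)_{c_0}\bigr)^*=(\bigoplus_n X_n^*)_{\ell_1}$, while the dual of the $\ell_\infty$-sum is much larger (already for $X_n=\mathbb{R}$ one gets $(\ell_\infty)^*=ba(\N)\supsetneq\ell_1$). Everything in Step~3 that uses the block decomposition $\xi_k=\sum_n\xi_k^{(n)}$ of a general $\xi_k\in X^*$ therefore collapses.

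\medskip
\textbf{The uniform-tail claim in Step~3 is also false on its own terms.} Even if one had an $\ell_1$-type dual, ``boundedness forces the tails $\sum_{n>N}\|\xi_k^{(n)}\|$ to be small uniformly in $k$'' is simply wrong: the unit vectors $(e_k)$ form a bounded sequence in $\ell_1$ with no uniform tail control. So the $\eps/3$ argument cannot proceed as sketched. You flag Step~3 as delicate, but the actual obstruction is not the sum theorem per se---it is that the stated argument is invalid regardless.

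\medskip
\textbf{How the paper repairs both issues.} The paper takes the $\ell_2$-sum $X=\oplus_{\ell_2}X_n$ instead. Then $X^*=\oplus_{\ell_2}X_n^*$ and $X^{**}=\oplus_{\ell_2}X_n^{**}$ genuinely hold, and the uniform-convergence step is obtained by Cauchy--Schwarz: for fixed $x^{**}\in X^{**}$ one bounds $\bigl|\sum_{n\ge j}x^{**}(n)(x_k^*(n))\bigr|$ by $\bigl(\sum_{n\ge j}\|x^{**}(n)\|^2\bigr)^{1/2}\|x_k^*\|$, and it is the tail of $x^{**}$ (independent of $k$) that is made small. For the blocks, rather than searching for exotic $C(K_n)$, the paper renorms $\ell_\infty$: Proposition~3.1 produces, for each $n$, an equivalent norm $\|\cdot\|_n$ on $\ell_\infty$ so that $X_n=(\ell_\infty,\|\cdot\|_n)$ fails $n$-Grothendieck while of course remaining Grothendieck. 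Finally, the quantitative failure of $X$ is read off not by embedding witness sequences, but by noting that each $X_n$ is a quotient of $X$ and that $c$-Grothendieck passes to quotients (Lemma~3.3).
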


Section 4 contains a generalization of Theorem \ref{q-groth} and its consequences.


\section{Relation to (I)-envelopes}

In this section, we characterize the quantitative Grothendieck property using (I)-envelopes. Some results on (I)-envelopes presented in \cite{Kalenda1} and \cite{Kalenda2} have been found extremely useful to us.
\begin{definition}
Let $X$ be a Banach space and $B\subset X^*$. The (I)-envelope of $B$ is defined by
$$\ienv(B)=\bigcap\left\{\cl{\|\cdot\|}{\conv\bigcup_{n=1}^\infty \cl{w^*}{\conv C_n}}:\ B=\bigcup_{n=1}^\infty C_n \right\}.$$
\end{definition}

Any Banach space $X$ is considered to be canonically embedded into its bidual $X^{**}$. If $B$ is a set in a Banach space $X$, then $B$ is regarded as a subset of $X^{**}$ and so is the (I)-envelope of $B$. By $\cl{w^*}{B}$ we mean the weak$^*$ closure of $B$ in $X^{**}$. 

The following lemma, proved by Kalenda \cite[Lemma 2.3]{Kalenda1}, provides the characterization of (I)-envelopes. It allows us to prove Proposition \ref{groth-ienv}, which describes the relation between (I)-envelopes and the quantitative Grothendieck property.

\begin{lemma}
\label{key}
Let $X$ be a Banach space, $B\subset X$ be a closed convex set and $z^{**}\in\cl{w^*}{B}$. Then the following conditions are equivalent:
\begin{enumerate}[(1)]
\item $z^{**}\notin\ienv(B)$;
\item there is a sequence $(\xi^*_n)$ in $B_{X^*}$ such that $$\sup_{x\in B}\limsup_{n\to\infty}{\xi^*_n(x)} < \inf_{n\in\N}{z^{**}(\xi^*_n)};$$
\item there is a sequence $(\xi^*_n)$ in $B_{X^*}$ such that $$\sup_{x\in B}\limsup_{n\to\infty}{\xi^*_n(x)} < \liminf_{n\to\infty}{z^{**}(\xi^*_n)};$$
\item there is a sequence $(\xi^*_n)$ in $B_{X^*}$ such that $$\sup_{x\in B}\limsup_{n\to\infty}{\xi^*_n(x)} < \limsup_{n\to\infty}{z^{**}(\xi^*_n)}.$$
\end{enumerate}
\end{lemma}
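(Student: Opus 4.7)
The implications $(2)\Rightarrow(3)\Rightarrow(4)$ are immediate from $\inf_n\le\liminf_n\le\limsup_n$, so the content lies in $(4)\Rightarrow(1)$ and $(1)\Rightarrow(2)$.

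For $(4)\Rightarrow(1)$ I argue the contrapositive. Assume $z^{**}\in\ienv(B)$ and take any $(\xi^*_n)\subset B_{X^*}$. Set $a=\sup_{x\in B}\limsup_n\xi^*_n(x)$ and, for fixed $\eps>0$, let $C_m=\{x\in B:\xi^*_k(x)\le a+\eps\text{ for all }k\ge m\}$, so that $B=\bigcup_m C_m$. For fixed $m$, the inequality $\xi^*_k(\cdot)\le a+\eps$ (with $k\ge m$) propagates to $\cl{w^*}{\conv C_m}$ by weak$^*$-continuity of $\xi^*_k$, then to finite convex combinations of such sets, and finally — using $\|\xi^*_k\|\le1$ — survives as a bound on $\limsup_k$ along norm-convergent sequences. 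Since $z^{**}\in\ienv(B)$ lies in $\cl{\|\cdot\|}{\conv\bigcup_m\cl{w^*}{\conv C_m}}$, this forces $\limsup_n z^{**}(\xi^*_n)\le a+\eps$, and letting $\eps\to0$ contradicts (4).

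For $(1)\Rightarrow(2)$ I fix a decomposition $B=\bigcup_n C_n$ witnessing $z^{**}\notin\ienv(B)$ and set $E_n:=\conv(\cl{w^*}{\conv C_1}\cup\cdots\cup\cl{w^*}{\conv C_n})$ and $E:=\cl{\|\cdot\|}{\bigcup_n E_n}$. Each $E_n$ is weak$^*$-compact convex (a finite convex hull of $w^*$-compact convex sets, realized as the continuous image of a $w^*$-compact product), and from $z^{**}\notin E$ we get $d:=\dist(z^{**},E)>0$ and $\dist(z^{**},E_n)\ge d$ for every $n$. The key geometric step is that the fattened set $E_n+(d-\tfrac1n)\overline{B_{X^{**}}}$ is again weak$^*$-compact convex and still excludes $z^{**}$; Hahn-Banach in the $w^*$-topology on $X^{**}$ thus supplies a weak$^*$-continuous functional — that is, an element $\xi^*_n\in X^*$ — which after scaling to $\|\xi^*_n\|\le1$ satisfies $z^{**}(\xi^*_n)-\sup_{E_n}\xi^*_n>d-\tfrac1n$. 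Passing to a subsequence along which the bounded scalars $z^{**}(\xi^*_n)$ converge to some $\ell$ and discarding an initial segment so that $z^{**}(\xi^*_n)>\ell-d/2$ throughout, every $x\in B$ lies in some $C_m$, hence in $E_n$ for $n\ge m$, giving $\xi^*_n(x)\le z^{**}(\xi^*_n)-d+\tfrac1n$. Therefore $\sup_{x\in B}\limsup_n\xi^*_n(x)\le\ell-d<\ell-d/2\le\inf_n z^{**}(\xi^*_n)$, which is (2).

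The main obstacle is this Hahn-Banach step with a functional from $X^*$ rather than $X^{***}$. Separating $z^{**}$ from $E$ abstractly in the bidual produces only some $F\in X^{***}$, and Goldstine approximation of $F$ by elements of $B_{X^*}$ converges merely pointwise on $X^{**}$, which is too weak to control $\sup_{E_n}\xi^*_n$. The fattening trick bypasses this by encoding the norm distance $d$ directly into a weak$^*$-compact convex set, after which the $w^*$-geometry yields a quantitative separating functional from $X^*$ for free. (A minor technicality: for $\cl{w^*}{\conv C_n}$ to be $w^*$-compact one needs each $C_n$ bounded, which is arranged either by assuming $B$ bounded or by first refining the decomposition against balls in $X$.)
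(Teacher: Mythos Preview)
The paper does not prove this lemma; it merely cites Kalenda \cite[Lemma 2.3]{Kalenda1} and states the result. So there is no in-paper proof to compare against, and your write-up stands on its own merits.

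Your argument is correct. The chain $(2)\Rightarrow(3)\Rightarrow(4)$ is indeed trivial. For $(4)\Rightarrow(1)$ the decomposition $C_m=\{x\in B:\xi^*_k(x)\le a+\eps\text{ for all }k\ge m\}$ is exactly the right one: the inequality $\xi^*_k(\cdot)\le a+\eps$ for $k\ge m$ passes to $\cl{w^*}{\conv C_m}$ by $w^*$-continuity of $\xi^*_k\in X^*$ on $X^{**}$, finite convex combinations across different $m$'s retain the bound from the largest index on, and the norm-closure step uses only $\|\xi^*_k\|\le1$. For $(1)\Rightarrow(2)$ your fattening device $E_n+(d-\tfrac1n)B_{X^{**}}$ is the crucial idea: it is $w^*$-compact convex and misses $z^{**}$, so Hahn--Banach in $(X^{**},w^*)$ hands you a separating functional that is automatically an element of $X^*$; after normalizing to $\|\xi^*_n\|=1$ the gap $z^{**}(\xi^*_n)-\sup_{E_n}\xi^*_n>d-\tfrac1n$ drops out because $\sup_{B_{X^{**}}}\xi^*_n=\|\xi^*_n\|$. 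The subsequence-and-truncation endgame then delivers the uniform lower bound $\inf_n z^{**}(\xi^*_n)>\ell-d/2$ together with $\sup_{x\in B}\limsup_n\xi^*_n(x)\le\ell-d$, which is precisely (2).

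Two small remarks. First, your parenthetical about boundedness is correct and necessary: refining a witnessing decomposition $B=\bigcup_n C_n$ to $B=\bigcup_{n,k}(C_n\cap kB_X)$ only shrinks the associated envelope set, so $z^{**}$ remains outside, and now each $\cl{w^*}{\conv(C_n\cap kB_X)}$ is genuinely $w^*$-compact. Second, when you pass to a subsequence and then discard an initial segment, the indices $n_j$ of the surviving terms still tend to infinity, so every $x\in B$ eventually lands in $E_{n_j}$ and the $\limsup$ bound goes through; you handle this correctly, but it is worth making the relabelling explicit in a final write-up.
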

 
\begin{proposition}
\label{groth-ienv}
Let $X$ be a Banach space and $c\geq 1$. Then $X$ is $c$-Grothendieck if and only if $\ienv(B_X) \supset \frac1c B_{X^{**}}$.
\end{proposition}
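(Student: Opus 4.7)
The plan is to exploit the identity $\inf_n \sup_{k,l \geq n} |a_k - a_l| = \limsup_n a_n - \liminf_n a_n$ for bounded real sequences, which turns both $\dws$ and $\dw$ into suprema of oscillations over $B_X$ and $B_{X^{**}}$ respectively. Combined with Lemma \ref{key}, whose negation reads: for $z^{**} \in \cl{w^*}{B_X}$ one has $z^{**} \in \ienv(B_X)$ iff every $(\xi^*_n) \subset B_{X^*}$ satisfies $\sup_{x \in B_X} \limsup_n \xi^*_n(x) \geq \limsup_n z^{**}(\xi^*_n)$, this should translate the quantitative Grothendieck inequality into an (I)-envelope inclusion.

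For the ``if'' direction, fix a bounded $(x^*_n) \subset X^*$ with $M = \sup_n \|x^*_n\|$ and $x^{**} \in B_{X^{**}}$, and let $\omega = \limsup_n x^{**}(x^*_n) - \liminf_n x^{**}(x^*_n)$. Pick subsequences $(x^*_{k_n})$, $(x^*_{l_n})$ with $x^{**}(x^*_{k_n}) \to \limsup$ and $x^{**}(x^*_{l_n}) \to \liminf$, and set $\xi^*_n := \frac{1}{2M}(x^*_{k_n} - x^*_{l_n}) \in B_{X^*}$. Then $x^{**}(\xi^*_n) \to \omega/(2M)$, while subadditivity of $\limsup$ yields $\limsup_n \xi^*_n(x) \leq \dws(x^*_n)/(2M)$ for every $x \in B_X$. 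Applying the reformulated Lemma \ref{key} to $z^{**} := x^{**}/c$, which lies in $\tfrac{1}{c}B_{X^{**}} \subset \ienv(B_X)$ and in $\cl{w^*}{B_X}$ by Goldstine (since $c \geq 1$), gives $\omega \leq c\,\dws(x^*_n)$; taking the supremum over $x^{**}$ completes this direction.

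For the ``only if'' direction, argue contrapositively: suppose $z^{**} \in \tfrac{1}{c}B_{X^{**}}\setminus\ienv(B_X)$. By Lemma \ref{key}(4) pick $(\xi^*_n) \subset B_{X^*}$ with $\alpha := \sup_{x \in B_X} \limsup_n \xi^*_n(x) < \gamma := \limsup_n z^{**}(\xi^*_n)$, and after thinning assume $z^{**}(\xi^*_n) \to \gamma$. Since $0 \in B_X$ and $B_X = -B_X$, $\alpha \geq 0$ and $|\limsup_n \xi^*_n(x)|, |\liminf_n \xi^*_n(x)| \leq \alpha$ for every $x \in B_X$. Now form the interleaved sequence $y^*_{2k} = \xi^*_k,\ y^*_{2k+1} = -\xi^*_k$ in $B_{X^*}$. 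A short computation with max's and min's of subsequential limits gives
$$\limsup_n y^*_n(x) - \liminf_n y^*_n(x) = 2\max\bigl(|\limsup_n \xi^*_n(x)|,\ |\liminf_n \xi^*_n(x)|\bigr) \leq 2\alpha,$$
so $\dws(y^*_n) \leq 2\alpha$; while $z^{**}(y^*_n)$ alternates between limits $\gamma$ and $-\gamma$, so its oscillation is $2\gamma$, and evaluating at $cz^{**} \in B_{X^{**}}$ delivers $\dw(y^*_n) \geq 2c\gamma > 2c\alpha \geq c\,\dws(y^*_n)$, contradicting that $X$ is $c$-Grothendieck. The delicate point is precisely this choice of $y^*_n$: interleaving with $0$ instead would bound $\dws$ in the same way but only produce $\dw \geq c\gamma$, insufficient to force a strict violation when merely $\gamma > \alpha$ is known. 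Negating uses the symmetry $B_X = -B_X$ to promote the one-sided bound on $\limsup \xi^*_n(x)$ to a two-sided bound and simultaneously doubles the oscillation witnessed at $z^{**}$.
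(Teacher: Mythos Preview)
Your proof is correct and uses the same two key constructions as the paper: the half-difference $\xi^*_n=\tfrac{1}{2M}(x^*_{k_n}-x^*_{l_n})$ of $\limsup$/$\liminf$ subsequences for one implication, and the interleaving $(\xi^*_1,-\xi^*_1,\xi^*_2,-\xi^*_2,\dots)$ for the other. The only notable difference is organizational: for the ``only if'' direction the paper argues directly, first establishing via a dichotomy that for every $(x^*_n)\subset B_{X^*}$ one of $\pm z^{**}$ satisfies $\limsup z^{**}(x^*_n)\le \sup_{x\in B_X}\limsup x^*_n(x)$, and then feeding in the interleaved sequence to resolve the sign ambiguity; you instead run the contrapositive, starting from $z^{**}\notin\ienv(B_X)$, interleaving the witnessing sequence with its negatives, and reading off $\dw(y^*_n)\ge 2c\gamma>2c\alpha\ge c\,\dws(y^*_n)$ directly. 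Your route is a bit more streamlined since it bypasses the dichotomy step, but the underlying idea---exploiting the symmetry $B_X=-B_X$ via alternating signs---is identical.
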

\begin{proof}
Suppose that $X$ is not $c$-Grothendieck. Find a bounded sequence $(x^*_n)$ in $X^*$ such that $\dw(x^*_n)>c\dws(x^*_n)$, i.e.
$$\sup_{x^{**}\in B_{X^{**}}}\inf_{n\in\N}\sup_{k,l\geq n}{|x^{**}(x^*_k)-x^{**}(x^*_l)|} > c \sup_{x\in B_X}\inf_{n\in\N}\sup_{k,l\geq n}{|x^*_k(x)-x^*_l(x)|}.$$
There is no loss of generality in assuming that $x^*_n \in B_{X^*}$, $n\in\N$. Let $x^{**}\in B_{X^{**}}$ be such that
$$\inf_{n\in\N}\sup_{k,l\geq n}{|x^{**}(x^*_k)-x^{**}(x^*_l)|} > c \sup_{x\in B_X}\inf_{n\in\N}\sup_{k,l\geq n}{|x^*_k(x)-x^*_l(x)|},$$
and set $z^{**}=\frac1c x^{**}$. Then $z^{**}\in\frac1c B_{X^{**}}$, and
\begin{equation}
\label{limsup-liminf}
\begin{aligned}
\limsup_{n\to\infty}{z^{**}(x^*_n)}-\liminf_{n\to\infty}{z^{**}(x^*_n)} &= \inf_{n\in\N}\sup_{k,l\geq n}{|z^{**}(x^*_k)-z^{**}(x^*_l)|} \\
&= \frac1c \inf_{n\in\N}\sup_{k,l\geq n}{|x^{**}(x^*_k)-x^{**}(x^*_l)|}\\
&> \sup_{x\in B_X}\inf_{n\in\N}\sup_{k,l\geq n}{|x^*_k(x)-x^*_l(x)|} \\
&= \sup_{x\in B_X} \left(\limsup_{n\to\infty}{x^*_n(x)}-\liminf_{n\to\infty}{x^*_n(x)} \right).
\end{aligned}
\end{equation}
Find subsequences $(y^*_k)$ and $(z^*_k)$ of the sequence $(x^*_n)$ for which\linebreak $\limsup_{n\to\infty}{z^{**}(x^*_n)} = \lim_{k\to\infty}{z^{**}(y^*_k)}$, and
$\liminf_{n\to\infty}{z^{**}(x^*_n)} = \lim_{k\to\infty}{z^{**}(z^*_k)}$.
Set $\xi^*_k=\frac12(y^*_k-z^*_k)$, $k\in\N$. Then $(\xi^*_k)$ is a sequence in $B_{X^*}$, and
$$
\begin{aligned}
\lim_{k\to\infty}{z^{**}(\xi^*_k)}&=\frac12\left(\lim_{k\to\infty}{z^{**}(y_k)}-\lim_{k\to\infty}{z^{**}(z_k)}\right) \\
&=\frac12\left(\limsup_{n\to\infty}{z^{**}(x^*_n)} - \liminf_{n\to\infty}{z^{**}(x^*_n)}\right) \\
&\stackrel{(\ref{limsup-liminf})}{>} \frac12 \sup_{x\in B_X} \left(\limsup_{n\to\infty}{x^*_n(x)}-\liminf_{n\to\infty}{x^*_n(x)} \right) \\
&\geq \frac12 \sup_{x\in B_X} \left(\limsup_{k\to\infty}{y^*_k(x)}-\liminf_{k\to\infty}{z^*_k(x)} \right) \\
&\geq \frac12 \sup_{x\in B_X} \limsup_{k\to\infty}{(y^*_k(x)-z^*_k(x))} \\
&= \sup_{x\in B_X} \limsup_{k\to\infty}{\xi^*_k(x)}.
\end{aligned}
$$
By Lemma \ref{key}, $z^{**}\notin \ienv(B_X)$, and so $\ienv(B_X)\not\supset \frac1c B_{X^{**}}$.

Now suppose that $X$ is $c$-Grothendieck and fix arbitrary $z^{**}\in\frac1c B_{X^{**}}$. Let $(x^*_n)$ be a sequence in $B_{X^*}$. Then $\dw(x^*_n)\leq c \dws(x^*_n)$, that is
$$\sup_{x^{**}\in B_{X^{**}}} \hskip -1.2 mm \Big(\limsup_{n\to\infty}{x^{**}(x^*_n)}-\liminf_{n\to\infty}{x^{**}(x^*_n)} \Big) \leq c\sup_{x\in B_X} \hskip -0.8 mm\Big(\limsup_{n\to\infty}{x^*_n(x)}-\liminf_{n\to\infty}{x^*_n(x)} \Big).$$
Since $cz^{**}\in B_{X^{**}}$, it follows that
$$
\begin{aligned}
\limsup_{n\to\infty}{z^{**}(x^*_n)} - \liminf_{n\to\infty}{z^{**}(x^*_n)} &= \frac1c\Big(\limsup_{n\to\infty}{cz^{**}(x^*_n)} - \liminf_{n\to\infty}{cz^{**}(x^*_n)}\Big) \\
&\leq \sup_{x\in B_X} \Big(\limsup_{n\to\infty}{x^*_n(x)}-\liminf_{n\to\infty}{x^*_n(x)} \Big).
\end{aligned}
$$
For $k\in\N$ find an $x_k\in B_X$ satisfying
$$\limsup_{n\to\infty}{x^*_n(x_k)}-\liminf_{n\to\infty}{x^*_n(x_k)} > \limsup_{n\to\infty}{z^{**}(x^*_n)} - \liminf_{n\to\infty}{z^{**}(x^*_n)} - \frac2k.$$
Then either $\limsup_{n\to\infty}{x^*_n(x_k)} > \limsup_{n\to\infty}{z^{**}(x^*_n)} - \frac1k$ or $\liminf_{n\to\infty}{x^*_n(x_k)} < \liminf_{n\to\infty}{z^{**}(x^*_n)} + \frac1k$.
If the former inequality holds for infinitely many $k\in\N$, then $\limsup_{n\to\infty}{z^{**}(x^*_n)} \leq \sup_{x\in B_X} \limsup_{n\to\infty}{x^*_n(x)}$. Otherwise the latter holds for infinitely many $k\in\N$, and $\liminf_{n\to\infty}{z^{**}(x^*_n)} \geq \inf_{x\in B_X}\liminf_{n\to\infty}{x^*_n(x)}$, which gives
$\limsup_{n\to\infty}{-z^{**}(x^*_n)} \leq \sup_{x\in B_X} \limsup_{n\to\infty}{x^*_n(x)}$.

So far we have shown that whenever $(x^*_n)$ is a sequence in $B_{X^*}$, either
$$\limsup_{n\to\infty}{z^{**}(x^*_n)} \leq \sup_{x\in B_X} \limsup_{n\to\infty}{x^*_n(x)} \quad \text{or} \quad \limsup_{n\to\infty}{-z^{**}(x^*_n)} \leq \sup_{x\in B_X} \limsup_{n\to\infty}{x^*_n(x)}.$$
Consider now an arbitrary sequence $(x^*_n)$ in $B_{X^*}$.
Set $(y^*_n)_n\hskip -0.7 mm =\hskip -0.3 mm(x^*_1,-x^*_1,x^*_2,-x^*_2,\dots\hskip -0.05 mm)$. From what has already been proved, we obtain
$$\limsup_{n\to\infty}{z^{**}(y^*_n)} = \limsup_{n\to\infty}{-z^{**}(y^*_n)} \leq \sup_{x\in B_X} \limsup_{n\to\infty}{y^*_n(x)}.$$
Hence
$$
\begin{aligned}
\limsup_{n\to\infty}{z^{**}(x^*_n)} &\leq \limsup_{n\to\infty}{z^{**}(y^*_n)} \leq \sup_{x\in B_X} \limsup_{n\to\infty}{y^*_n(x)} \\
&= \sup_{x\in B_X} \max\{\limsup_{n\to\infty}{x^*_n(x)},\limsup_{n\to\infty}{-x^*_n(x)}\} \\
&= \sup_{x\in B_X} \limsup_{n\to\infty}{x^*_n(x)}.
\end{aligned}$$
Lemma \ref{key} gives $z^{**}\in\ienv(B)$, which shows that $\frac1c B_{X^{**}}\subset \ienv(B)$.
\end{proof}

We are now able to prove Theorem \ref{q-groth}. It is a trivial consequence of Proposition \ref{groth-ienv} and Kalenda's theorem \cite[Example 4.1]{Kalenda1}, which says that $\ienv(B_{\ell_\infty})=B_{(\ell_\infty)^{**}}$.


\section{The relation between Grothendieck property and its quantitative version}

We have already mentioned that the quantitative Grothendieck property is\linebreak stronger than its original qualitative version.
This section is devoted to the construction of a Banach space which is Grothendieck but not $c$-Grothendieck for any $c\geq 1$.

The following proposition is a strengthening of Kalenda's theorem \cite[Theorem 2.2]{Kalenda2}, and its proof is a modification of the original one.

\begin{proposition}
\label{renorm}
Let $X$ be a nonreflexive Banach space and $c\geq 1$. Then there exists an equivalent norm $\trn{\cdot}$ on $X$ such that $(X,\trn{\cdot})$ is not $c$-Grothendieck.
\end{proposition}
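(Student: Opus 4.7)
By Proposition \ref{groth-ienv} it suffices to exhibit, for the given $c\ge 1$, an equivalent norm $\trn{\cdot}$ on $X$ together with a point $z^{**}\in X^{**}$ such that $\trn{z^{**}}\le 1/c$ and $z^{**}\notin\ienv(B_X^{\trn{\cdot}})$. By Lemma \ref{key}, non-membership in the (I)-envelope will be certified by producing a sequence $(\xi^*_n)\subset B_{X^*}^{\trn{\cdot}}$ satisfying
$$\sup_{x\in B_X^{\trn{\cdot}}}\limsup_n \xi^*_n(x)\;<\;\limsup_n z^{**}(\xi^*_n).$$

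The sequence and the bidual element come from the failure of reflexivity. A Hahn-Banach argument in $X^{**}$ yields a norm-one $x^{**}_0\in X^{**}$ with $\dist(x^{**}_0,X)=1$, together with $\phi\in B_{X^{***}}$ such that $\phi|_X=0$ and $\phi(x^{**}_0)=1$. Goldstine's theorem places $\phi$ in the weak$^*$-closure of $B_{X^*}$ inside $B_{X^{***}}$, and a standard separable reduction (working inside a separable subspace of $X^{**}$ containing $x^{**}_0$, on which the relevant weak$^*$ topology is metrizable on bounded sets) produces a genuine sequence $(x^*_n)\subset B_{X^*}$ playing the role of a sequential witness: $x^{**}_0(x^*_n)\to 1$ while $x^*_n(y)\to 0$ on a dense subset of that subspace.

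Following the argument of \cite[Theorem 2.2]{Kalenda2} but now with the target constant $c$ built into the construction, define an equivalent norm on $X$ by a formula of the shape
$$\trn{x}\;=\;\max\bigl(\alpha\|x\|,\ \beta\sup_n|x^*_n(x)|\bigr),$$
with weights $\alpha=\alpha(c)$ and $\beta=\beta(c)$ chosen so that the polar calculation on $X^*$ and on $X^{**}$ yields the prescribed scaling: a suitable rescaling $z^{**}:=\lambda x^{**}_0$ has $\trn{z^{**}}\le 1/c$, the rescaled functionals $\xi^*_n:=\mu x^*_n$ lie in $B_{X^*}^{\trn{\cdot}}$, and the pointwise decay of $(x^*_n)$ controls $\sup_{x\in B_X^{\trn{\cdot}}}\limsup_n \xi^*_n(x)$ strictly below $\limsup_n z^{**}(\xi^*_n)$. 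Lemma \ref{key} then gives $z^{**}\notin\ienv(B_X^{\trn{\cdot}})$, and Proposition \ref{groth-ienv} yields failure of the $c$-Grothendieck property.

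The main obstacle is to coordinate these four parameters $\alpha,\beta,\lambda,\mu$ so that three demands hold simultaneously: (i) $\trn{\cdot}$ is genuinely equivalent to $\|\cdot\|$; (ii) the new bidual norm of the candidate $z^{**}$ is pushed down to $1/c$; and (iii) the pointwise-smallness estimate on $B_X^{\trn{\cdot}}$ survives outside the separable subspace used for the sequential extraction. Arranging (iii) is delicate in the possibly non-separable and possibly Grothendieck setting (where no weak$^*$-null non-weakly-null sequence exists in $X^*$), and it is precisely here that the proof of \cite[Theorem 2.2]{Kalenda2} requires modification so that the constant $c$ can be accommodated.
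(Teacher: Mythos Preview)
Your framework is correct: Proposition~\ref{groth-ienv} and Lemma~\ref{key} are the right tools, and the difficulty you isolate---that in a Grothendieck space no bounded sequence in $X^*$ can be weak$^*$-null without being weakly null---is exactly the obstruction. However, the construction you propose cannot close the gap, and the last paragraph only names the problem without resolving it.

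Concretely, with $\trn{x}=\max(\alpha\|x\|,\beta\sup_n|x^*_n(x)|)$ the unit ball $B^{\trn{\cdot}}$ is contained in the weak$^*$-closed set $\{x^{**}:|x^{**}(x^*_n)|\le 1/\beta\ \text{for all }n\}$, hence so is its weak$^*$ closure $B_{X^{**}}^{\trn{\cdot}}$. Thus if $z^{**}=\lambda x^{**}_0$ satisfies $\trn{z^{**}}\le 1/c$, then $|\lambda\,x^{**}_0(x^*_n)|\le 1/(c\beta)$ for all $n$, and since $x^{**}_0(x^*_n)\to 1$ this forces $\lambda\le 1/(c\beta)$. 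On the other hand your gap condition reads $\lambda\mu>\mu/\beta$, i.e.\ $\lambda>1/\beta$. The two inequalities are incompatible for $c\ge 1$. In short, the very functionals $x^*_n$ that are supposed to witness $z^{**}\notin\ienv(B^{\trn{\cdot}})$ also participate in the definition of $\trn{\cdot}$ and pin down $\trn{z^{**}}$ from below; a one-sequence, max-type norm cannot decouple these roles.

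The paper's construction avoids this by using two independent ingredients: a functional $x^*\in S_{X^*}$ vanishing on a separable nonreflexive subspace $Y$ together with a point $x_0$ where $x^*(x_0)=1$, and a sequence $(x^*_n)\subset B_{X^*}$ that is weak$^*$-null only on the separable space $Z=\lin(Y\cup\{x_0\})$ while $y^{**}(x^*_n)\ge\eta$ for some $y^{**}\in S_{Y^{**}}\setminus X$. The new ball is cut out not by $\sup_n|x^*_n(\cdot)|$ but by $|x^*(x)|+\dist(x-x^*(x)x_0,Y)\le\eta/c$, which forces elements of $B$ to be concentrated near $Y$; the witnessing sequence is $\xi^*_n=x^*+x^*_n$ and the test point is $z^{**}=\tfrac{1}{c}(\tfrac{\eta}{c}x_0+y^{**})$. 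Here the $x^*$-component delivers the strict gap, while the $x^*_n$-component (null on $Z$) handles the $x_0$-part of $z^{**}$ and is controlled on $B$ via the distance-to-$Y$ term. The point is that $\xi^*_n$ does \emph{not} enter the definition of the norm, so the incompatibility above never arises.
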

\begin{proof}
If $X$ is separable, then $\ienv(B_X)=B_X$ (see \cite[Remark 1.1(ii)]{Kalenda1}). By nonreflexivity, $\frac1cB_{X^{**}}\not\subset B_X$ for any $c\geq1$, so the assertion follows from Proposition~\ref{groth-ienv}. Renorming is not necessary.

Suppose that $X$ is nonseparable. Find a separable subspace $Y\subset X$ which is not reflexive. Let $x^*\in S_{X^*}$ be such that $x^*|_Y=0$, and fix $x_0\in X$ with $x^*(x_0)=1$. Obviously, $\|x_0\|\geq 1$. The bidual $Y^{**}$ can be canonically identified with the $w^*\text{-}$closure of $Y$ in $X^{**}$, and $Y=Y^{**}\cap X$. Thus we can find some $y^{**}\in S_{Y^{**}}\setminus X$. Set $Z=\lin(Y\cup\{x_0\})$. Since $y^{**}\in Z^{**}\setminus Z$, $y^{**}|_{B_{Z^*}}$ is not weak$^*$ continuous. Clearly, $Z$ is separable, thus $(B_{Z^*},w^*)$ is metrizable, hence $y^{**}|_{B_{Z^*}}$ is not even weak$^*$ sequentially continuous. Therefore there exists a sequence $(\widetilde{x^*_n})$ in $B_{Z^*}$ weak$^*$ converging to $0$ and $\eta\in(0,1]$ such that $y^{**}(\widetilde{x^*_n})\geq \eta$, $n\in\N$. For each $n\in\N$ extend $\widetilde{x^*_n}$ to $x^*_n\in B_{X^*}$ by the Hahn-Banach theorem.

Define
$$B=\left\{x\in X:\ \|x-x^*(x)x_0\| \leq 1 \text{ and } |x^*(x)| + \dist(x-x^*(x)x_0,Y)\leq\frac{\eta}{c}\right\}.$$
Then $B$ is a closed absolutely convex set. Moreover, we show that
$$\frac{\eta}{c(2+\|x_0\|)}B_X \subset B \subset \left(1+\frac{\eta}{c}\right)\|x_0\|B_X.$$
For $x\in B$ we have $$\|x\| \leq 1 + |x^*(x)|\|x_0\| \leq \|x_0\| + \frac{\eta}{c}\|x_0\|,$$ which proves the second inclusion.
To prove the first one let $x\in B_X$. Then
$$|x^*(x)|\leq 1,$$
$$\|x-x^*(x)x_0\| \leq 1 + \|x_0\|,$$
$$\dist(x-x^*(x)x_0,Y) \leq \|x-x^*(x)x_0\| \leq 1 + \|x_0\|.$$
Hence for $z=\frac{\eta x}{c(2+\|x_0\|)}$ we have
$$\left\|z - x^*\left(z\right)x_0\right\| = \frac{\eta}{c(2+\|x_0\|)}\|x-x^*(x)x_0\| \leq \frac{\eta}{c}\frac{1+\|x_0\|}{2+\|x_0\|}\leq 1,$$
and
$$
\begin{aligned}
\left|x^*(z)\right| + \dist\left(z-x^*\left(z\right)x_0,Y\right) &\leq \frac{\eta}{c(2+\|x_0\|)} + \frac{\eta}{c(2+\|x_0\|)}(1+\|x_0\|) \\
&\leq \frac{\eta}{c}\frac{1+1+\|x_0\|}{2+\|x_0\|} = \frac{\eta}{c}.
\end{aligned}
$$
Thus $B$ is the unit ball of an equivalent norm on $X$. According to Proposition~\ref{groth-ienv}, we shall have established the proposition if we show that $\frac1c\cl{w^*}{B}\not\subset\ienv(B)$.

Set $z^{**}=\frac1c(\frac{\eta}{c}x_0+y^{**})$. Let $(y_\nu)$ be a net in $B_Y$ weak$^*$ converging to $y^{**}$. Then $\frac{\eta}{c}x_0 + y_\nu$ weak$^*$ converges to $\frac{\eta}{c}x_0+y^{**}$. Furthermore, $\frac{\eta}{c}x_0 + y_\nu\in B$ since
$$x^*\left(\frac{\eta}{c}x_0 + y_\nu\right) = \frac{\eta}{c} x^*(x_0) + x^*(y_\nu)=\frac{\eta}{c},$$
$$\left\|\frac{\eta}{c}x_0 + y_\nu - x^*\left(\frac{\eta}{c}x_0 + y_\nu\right)x_0 \right\| = \|y_\nu\| \leq 1,$$
$$\dist\left(\frac{\eta}{c}x_0 + y_\nu - x^*\left(\frac{\eta}{c}x_0 + y_\nu\right)x_0, Y\right) = \dist(y_\nu, Y) = 0.$$
Therefore $z^{**}\in\frac1c\cl{w^*}{B}$. It remains to prove that $z^{**}\notin\ienv(B)$. Define $\xi^*_n=x^*+x^*_n$, $n\in\N$. Then $(\xi^*_n)$ is a bounded sequence in $X^*$, and
$$
\begin{aligned}
\liminf_{n\to\infty} z^{**}(\xi^*_n) &= \frac1c\liminf_{n\to\infty}\left(x^*\left(\frac{\eta}{c}x_0\right) + x^*_n\left(\frac{\eta}{c}x_0\right) + y^{**}(x^*) + y^{**}(x^*_n)\right) \\
&=\frac1c\left(\frac{\eta}{c} + \frac{\eta}{c}\lim_{n\to\infty}x^*_n(x_0) + y^{**}(x^*) + \liminf_{n\to\infty}y^{**}(x^*_n) \right) \\
&\geq\frac1c\left(\frac{\eta}{c} + 0 + 0 + \eta\right) = \frac{c+1}{c}\frac{\eta}{c} > \frac{\eta}{c}. \\
\end{aligned}
$$
In the last inequality, we have used the following two facts. Firstly, $x^*_n(x_0)\to 0$, as $x_0\in Z$. Secondly, $y^{**}(x^*)=0$, since $y^{**}\in \cl{w^*}{Y}$ and $x^*|_Y=0$.
On the other hand, if $x\in B$, $y\in Y$ are arbitrary, then
$$
\begin{aligned}
\hskip -0.5 mm\limsup_{n\to\infty}\xi^*_n(x) &\hskip -0.3 mm = \hskip -0.3 mm x^*(x) \hskip -0.3 mm + \hskip -0.3 mm \limsup_{n\to\infty}(x^*_n(x-x^*(x)x_0-y) + x^*_n(x_0)x^*(x) + x^*_n(y)) \\
&\hskip -0.3 mm = \hskip -0.3 mm x^*(x)\hskip -0.3 mm + \hskip -0.3 mm \limsup_{n\to\infty}x^*_n(x-x^*(x)x_0-y)\hskip -0.3 mm +\hskip -1.3 mm \lim_{n\to\infty}x^*_n(x_0)x^*(x)\hskip -0.3 mm +\hskip -1.4 mm \lim_{n\to\infty}x^*_n(y) \\
&\hskip -0.3 mm \leq \hskip -0.3 mm x^*(x) \hskip -0.3 mm + \hskip -0.3 mm \limsup_{n\to\infty}\|x^*_n\|\|x-x^*(x)x_0-y\| + 0 + 0 \\
&\hskip -0.3 mm \leq \hskip -0.3 mm x^*(x) \hskip -0.3 mm + \hskip -0.3 mm \limsup_{n\to\infty}\|x-x^*(x)x_0-y\|,
\end{aligned}
$$
because $x_0,y\in Z$, and $x^*_n(z)\to 0$ for all $z\in Z$. Hence for every $x\in B$
$$\limsup_{n\to\infty}\xi^*_n(x) \leq |x^*(x)| + \dist(x-x^*(x)x_0, Y) \leq \frac{\eta}{c}.$$
We thus obtain
$$\liminf_{n\to\infty} z^{**}(\xi^*_n)>\frac{\eta}{c}\geq \sup_{x\in B}\limsup_{n\to\infty}\xi^*_n(x).$$
Lemma \ref{key} yields $z^{**}\notin\ienv(B)$, which completes the proof.
\end{proof}

\begin{lemma}
\label{sum}
Suppose that $X_n$, $n\in\N$, are Grothendieck spaces. Then the space $X=\oplus_{\ell_2}X_n$ is also Grothendieck.
\end{lemma}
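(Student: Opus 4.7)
The plan is to work coordinate-wise via the canonical identifications $X^* = \oplus_{\ell_2} X_n^*$ and $X^{**} = \oplus_{\ell_2} X_n^{**}$. Any bounded weak$^*$ Cauchy sequence in $X^*$ admits a weak$^*$ limit in $X^*$ (its pointwise limit is a bounded linear functional by the uniform boundedness principle), so after subtracting that limit it is enough to show that every weak$^*$ null sequence $(x_k^*)$ in $X^*$ is in fact weakly null.

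First I would write $x_k^* = (x_{k,n}^*)_n$ with $x_{k,n}^* \in X_n^*$ and set $M = \sup_k \|x_k^*\| < \infty$. Testing weak$^*$ nullity against vectors of the form $(0,\dots,0,x_n,0,\dots) \in X$ shows, for each fixed $n$, that the coordinate sequence $(x_{k,n}^*)_k$ is weak$^*$ null in $X_n^*$. The Grothendieck assumption on $X_n$ then yields that $(x_{k,n}^*)_k$ is weakly null in $X_n^*$ for every $n$.

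To finish, pick an arbitrary $x^{**}=(x_n^{**})\in X^{**}$ and expand $x^{**}(x_k^*) = \sum_n x_n^{**}(x_{k,n}^*)$. Splitting the series at an index $N$, Cauchy--Schwarz bounds the tail by $M\bigl(\sum_{n>N}\|x_n^{**}\|^2\bigr)^{1/2}$, which tends to $0$ as $N\to\infty$ uniformly in $k$; for each fixed $N$ the finite head $\sum_{n\leq N} x_n^{**}(x_{k,n}^*)$ tends to $0$ as $k\to\infty$ by the coordinate-wise weak nullity established in the previous step. Combining the two estimates gives $x^{**}(x_k^*)\to 0$, i.e. $(x_k^*)$ is weakly null.

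The only real obstacle is this last assembly step, where a $k$-limit has to be interchanged with an infinite $n$-sum; the $\ell_2$-structure of $X^{**}$ together with the uniform bound $M$ on $(x_k^*)$ is exactly what makes the truncation argument go through. Everything else is routine manipulation of the $\ell_2$ sum and its dual.
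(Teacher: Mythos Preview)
Your argument is correct and essentially identical to the paper's proof: both use the identifications $X^*=\oplus_{\ell_2}X_n^*$, $X^{**}=\oplus_{\ell_2}X_n^{**}$, test against single-coordinate vectors to reduce to coordinate-wise weak$^*$ convergence, invoke the Grothendieck property of each $X_n$, and then assemble the result via a Cauchy--Schwarz tail estimate relying on the uniform bound $M=\sup_k\|x_k^*\|$. The only cosmetic difference is that you first subtract the weak$^*$ limit to reduce to the weak$^*$ null case, whereas the paper carries the limit $x^*$ along throughout.
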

\begin{proof}
The dual space $X^*$ and the bidual space $X^{**}$ can be represented as $\oplus_{\ell_2}X^*_n$ and $\oplus_{\ell_2}X^{**}_n$, respectively. Let $(x^*_k)$ be a sequence in $X^*$ which weak$^*$ converges to $x^*\in X^*$.
For $x\in X$ we have $x^*_k(x) \to x^*(x)$, that is
$$\sum_{n=1}^{\infty}{x^*_k(n)(x(n))} \to \sum_{n=1}^{\infty}{x^*(n)(x(n))},\ k\to\infty.$$
Let $n\in\N$. If $x_n\in X_n$, then $\bar{x}_n = (
0,\dots,0,x_n,0,0,\dots)\in X$, and so 
$$x^*_k(n)(x_n) = x^*_k(\bar{x}_n) \to x^*(\bar{x}_n) = x^*(n)(x_n),\ k\to\infty.$$
Hence the sequence $(x^*_k(n))_k$ converges to $x^*(n)$ in the weak$^*$ topology, and by the Grothendieck property even in the weak topology. 

To prove that $x^*_k$ weakly converges to $x^*$, fix arbitrary $x^{**}\in X^{**}$. Then $x^{**}(n)(x^*_k(n))\to x^{**}(n)(x^*(n))$, $n\in\N$. We need to establish
$$\lim_{k\to\infty} \sum_{n=1}^{\infty} x^{**}(n)(x^*_k(n)) = \lim_{k\to\infty} x^{**}(x^*_k) = x^{**}(x^*) = \sum_{n=1}^{\infty} x^{**}(n)(x^*(n)),$$
so the proof is completed by showing that the sum $\sum_{n=1}^{\infty} x^{**}(n)(x^*_k(n))$ is uniformly convergent with respect to $k\in\N$.

Let $\eps>0$ and $k\in\N$ be arbitrary. If $j\in\N$, then
$$
\begin{aligned}
\Big|\sum_{n=j}^\infty x^{**}(n)(x^*_k(n)) \Big| &\leq \sum_{n=j}^\infty \|x^{**}(n)\| \|x^*_k(n)\| \\
&\leq \Big(\sum_{n=j}^\infty \|x^{**}(n)\|^2\Big)^{\frac12} \Big(\sum_{n=j}^\infty \|x^*_k(n)\|^2\Big)^{\frac12} \\
&\leq \Big(\sum_{n=j}^\infty \|x^{**}(n)\|^2\Big)^{\frac12} \|x^*_k\|_{X^*}.
\end{aligned}
$$
The sequence $(x^*_k)_k$ is bounded by the uniform boundedness principle. Hence $M>\nolinebreak 0$ can be found such that $\|x^*_k\|_{X^*} \leq M$, $k\in\N$. As $x^{**}\in \oplus_{\ell_2}X^{**}_n$, the sum $\sum_{n=1}^\infty \|x^{**}(n)\|^2$ is convergent. Thus we can choose $j_0\in\N$ such that for $j\geq j_0$
$$\sum_{n=j}^\infty \|x^{**}(n)\|^2 \leq \frac{\eps^2}{M^2}.$$
Then for all $j\geq j_0$
$$ \Big|\sum_{n=j}^\infty x^{**}(n)(x^*_k(n))\Big| \leq \left(\frac{\eps^2}{M^2}\right)^{\frac12}\cdot M = \eps,$$
which is the desired conclusion.
\end{proof}

\begin{lemma}
\label{subspace}
Let $X$ be a Banach space and $c\geq 1$. If $X$ is $c$-Grothendieck, and $Y$ is a quotient of $X$, then $Y$ is $c$-Grothendieck.
\end{lemma}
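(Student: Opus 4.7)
The plan is to pull the problem back from $Y$ to $X$ via the quotient map. Write $Y = q(X)$ for a quotient operator $q\colon X\to Y$, so that $q^*\colon Y^*\to X^*$ is an isometric embedding. Given a bounded sequence $(y^*_n)$ in $Y^*$, set $x^*_n = q^*(y^*_n) \in X^*$; I will show that both $\dws$ and $\dw$ are preserved under this assignment, and then the $c$-Grothendieck property of $X$ transfers immediately to $Y$.

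For $\dws$, observe that $q(B_X^\circ) = B_Y^\circ$ since $q$ is a quotient map, and the function $y\mapsto \inf_n\sup_{k,l\geq n}|y^*_k(y)-y^*_l(y)|$ is positively homogeneous, so its supremum over $B_Y$ equals its supremum over $B_Y^\circ$. Using $y^*_k(q(x)) = x^*_k(x)$, I get $\dws(y^*_n)=\dws(x^*_n)$ by a direct unpacking of the definitions.

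For $\dw$, the analogous argument works provided $q^{**}(B_{X^{**}}^\circ) = B_{Y^{**}}^\circ$; this is the technical heart. Since $q^*\colon Y^*\to X^*$ is an isometric embedding, its adjoint $q^{**}=(q^*)^*$ is a quotient map: for any $y^{**}\in Y^{**}$ and any $\eps>0$, the Hahn-Banach theorem produces a norm-preserving extension of the functional $y^*\mapsto y^{**}(y^*)$ from the closed subspace $q^*(Y^*)\subset X^*$ to all of $X^*$, yielding $x^{**}\in X^{**}$ with $q^{**}(x^{**})=y^{**}$ and $\|x^{**}\|=\|y^{**}\|$. With this in hand, using positive homogeneity of $y^{**}\mapsto \inf_n\sup_{k,l\geq n}|y^{**}(y^*_k)-y^{**}(y^*_l)|$ and the identity $x^{**}(q^*(y^*_k)) = q^{**}(x^{**})(y^*_k)$, I conclude $\dw(y^*_n)=\dw(x^*_n)$.

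Combining the two equalities with the $c$-Grothendieck inequality for $(x^*_n)$ in $X^*$ gives $\dw(y^*_n)=\dw(x^*_n)\leq c\,\dws(x^*_n)=c\,\dws(y^*_n)$, which is the required inequality for $Y$. The only nontrivial step is the surjectivity (with control on norms) of $q^{**}$, but this is an immediate Hahn-Banach argument, so the proof should be short.
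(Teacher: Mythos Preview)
Your proposal is correct and follows essentially the same route as the paper: pull back $(y^*_n)$ to $x^*_n=q^*y^*_n$, show $\dws(x^*_n)=\dws(y^*_n)$ via $q(B_X^\circ)=B_Y^\circ$ and homogeneity, show $\dw(x^*_n)=\dw(y^*_n)$ via the Hahn--Banach extension argument establishing $q^{**}(B_{X^{**}})=B_{Y^{**}}$, and then apply the $c$-Grothendieck inequality in $X$. The paper's proof is identical in structure and in the key technical points.
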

\begin{proof}
Let $q\colon X\to Y$ be a quotient map. It is easily seen that the dual operator $q^*\colon Y^*\to X^*$ is an isometric embedding. Consequently, $q^{**}\colon X^{**}\to Y^{**}$ satisfy $q^{**}(B_{X^{**}})=B_{Y^{**}}$. Indeed, for $x^{**}\in B_{X^{**}}$
$$\|q^{**}x^{**}\| = \|x^{**}\circ q^*\| \leq \|x^{**}\|\|q^*\|= \|x^{**}\| \leq 1,$$
thus $q^{**}x^{**} \in B_{Y^{**}}$. Let $y^{**}\in B_{Y^{**}}$ be arbitrary. Define a linear functional $x^{**}$ on $q^*(Y^*)\subset X^*$ by $x^{**}(q^*y^*) = y^{**}(y^*)$, $y^*\in Y^*$,
and extend it to a linear functional on $X^*$ with the same norm by the Hahn-Banach theorem. Obviously, $\|x^{**}\|=\|y^{**}\|$ and $q^{**}x^{**}=y^{**}$.

Let $(y^*_n)$ be a bounded sequence in $Y^*$. Then
\begin{equation}
\label{weakX-Y}
\begin{aligned}
\dw(q^*y^*_n) &= \sup_{x^{**}\in B_{X^{**}}} \inf_{n\in\N} \sup_{k,l\geq n} |x^{**}(q^*y^*_k) - x^{**}(q^*y^*_l)| \\
&= \sup_{x^{**}\in B_{X^{**}}} \inf_{n\in\N} \sup_{k,l\geq n} |q^{**}x^{**}(y^*_k) - q^{**}x^{**}(y^*_l)| \\
&= \sup_{y^{**}\in B_{Y^{**}}} \inf_{n\in\N} \sup_{k,l\geq n} |y^{**}(y^*_k) - y^{**}(y^*_l)| = \dw(y^*_n), \\
\end{aligned}
\end{equation}
and
\begin{equation}
\label{weaksX-Y}
\begin{aligned}
\dws(q^*y^*_n) &= \sup_{x\in B_X} \inf_{n\in\N} \sup_{k,l\geq n} |q^*y^*_k(x) - q^*y^*_l(x)| \\
&= \sup_{x\in B_X} \inf_{n\in\N} \sup_{k,l\geq n} |y^*_k(qx) - y^*_l(qx)| \\
&= \sup_{x\in X,\|x\|<1} \inf_{n\in\N} \sup_{k,l\geq n} |y^*_k(qx) - y^*_l(qx)| \\
&= \sup_{y\in y,\|y\|<1} \inf_{n\in\N} \sup_{k,l\geq n} |y^*_k(y) - y^*_l(y)| \\
&= \sup_{y\in B_Y} \inf_{n\in\N} \sup_{k,l\geq n} |y^*_k(y) - y^*_l(y)| = \dws(y^*_n),
\end{aligned}
\end{equation}
where the fourth equality follows from the fact that $q$ is a quotient map.
Since $X$ is $c$-Grothendieck, $\dw(q^*y^*_n) \leq c \dws(q^*y^*_n)$. Together with (\ref{weakX-Y}) and (\ref{weaksX-Y}), it yields
$\dw(y^*_n) \leq c\dws(y^*_n)$, so $Y$ is $c$-Grothendieck.
\end{proof}

\begin{proof}[Proof of Theorem \ref{groth!=qgroth}]
By Proposition \ref{renorm}, for each $n\in\N$ we can find an equivalent norm $\|\cdot\|_n$ on $\ell_\infty$ such that the space $X_n=(\ell_\infty,\|\cdot\|_n)$ is not $n$-Grothendieck. Set $X=\oplus_{\ell_2} X_n$. Then $X$ is Grothendieck by Lemma \ref{sum}, for all $X_n$, $n\in\N$, are Grothendieck spaces. Moreover, each $X_n$ is a quotient of $X$.
Suppose that there is some $c\geq 1$ such that $X$ is $c$-Grothendieck. Find $n\in\N$, $n>c$. Then $X$ is $n$-Grothendieck and, by Lemma \ref{subspace}, $X_n$ should also be $n$-Grothendieck, which is a~contradiction.
\end{proof}


\section{More general results}

Kalenda's theorem \cite[Example 4.1]{Kalenda1}, which we have used to prove a quantitative version of Grothendieck's theorem (Theorem \ref{q-groth}), can be generalized and then applied in the same way to obtain more general quantitative results.

\begin{theorem}
\label{gen-kal}
Let $\Gamma$ be a set and $E=\ell_{\infty}(\Gamma)$. Then $\ienv{B_E}=B_{E^{**}}$.
\end{theorem}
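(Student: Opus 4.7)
The inclusion $\ienv(B_E) \subseteq B_{E^{**}}$ is immediate from the definition and Goldstine's theorem: the trivial covering $B_E = \bigcup_n B_E$ yields $\ienv(B_E) \subseteq \cl{\|\cdot\|}{\conv \cl{w^*}{B_E}} = B_{E^{**}}$, since $B_E$ is convex and $B_{E^{**}}$ is norm-closed. So only the reverse inclusion requires work, and by Proposition \ref{groth-ienv} specialized to $c = 1$, it is equivalent to showing that $E = \ell_\infty(\Gamma)$ is 1-Grothendieck.

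The plan is to reduce the 1-Grothendieck property of $\ell_\infty(\Gamma)$ to the already-established countable case, namely Theorem \ref{q-groth} (Kalenda's Example 4.1 for $\ell_\infty$). Given an arbitrary bounded sequence $(\xi^*_n)$ in $E^* = ba(\Gamma)$, one aims to find a countable subset $\Gamma_0 \subseteq \Gamma$ such that $\dw(\xi^*_n)$ and $\dws(\xi^*_n)$ can be computed, or at least estimated, from analogous quantities for an associated sequence in $\ell_\infty(\Gamma_0)^* \cong \ell_\infty^*$. Here the subspace $\{x \in E : x|_{\Gamma \setminus \Gamma_0} = 0\}$ is isometric to $\ell_\infty(\Gamma_0)$ and is the range of the norm-1 projection $P \colon x \mapsto x \cdot \chi_{\Gamma_0}$; dually, $P^*\xi^*_n$ corresponds to the ``restriction of $\xi^*_n$ to $\Gamma_0$'', naturally identified with an element of $\ell_\infty(\Gamma_0)^*$. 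Since $P^{**}$ is a norm-1 projection on $E^{**}$, every test vector $z^{**} \in B_{E^{**}}$ has a canonical countable ``shadow'' $P^{**}z^{**}$, to which the countable version of the theorem applies.

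The main obstacle is that one cannot expect $\xi^*_n$ to coincide with $P^*\xi^*_n$ for a single countable $\Gamma_0$. Via the Yosida-Hewitt decomposition, each $\xi^*_n$ splits into an atomic part (automatically supported on the countable set $\{\gamma \in \Gamma : \xi^*_n(\{\gamma\}) \neq 0\}$) and a purely finitely additive part, and the latter may be concentrated on uncountable subsets of $\Gamma$ with no countable trace (e.g., measures arising from free ultrafilters on $\Gamma \setminus \Gamma_0$). Collecting the atomic supports across all $n$ yields a natural candidate for $\Gamma_0$, which handles the atomic pieces cleanly; the purely finitely additive pieces require a separate argument. I expect this to be handled by a disjointification / gliding hump step in the spirit of Rosenthal's lemma for $ba(\Gamma)$, or alternatively by appealing to the Stone representation $\ell_\infty(\Gamma) = C(\beta\Gamma_d)$ with $\beta\Gamma_d$ extremally disconnected, where $\{-1,1\}$-valued continuous witnesses for the supremum defining $\dws(\xi^*_n)$, i.e., signed indicators of clopen subsets, can be constructed directly from the Boolean algebra structure. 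Once the reduction to countable $\Gamma_0$ is in place, Theorem \ref{q-groth} finishes the argument.
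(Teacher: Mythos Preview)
Your proposal is a plan rather than a proof, and the gap you yourself flag is exactly where the content lies. The passage from $\ienv(B_E)\supseteq B_{E^{**}}$ to the $1$-Grothendieck property via Proposition~\ref{groth-ienv} is legitimate, and the forward inclusion is indeed trivial. The difficulty is the proposed reduction of the $1$-Grothendieck inequality for $\ell_\infty(\Gamma)$ to the countable case. As you note, the purely finitely additive part of each $\xi^*_n\in ba(\Gamma)$ need not be supported on any countable $\Gamma_0$, so the projection $P$ onto $\ell_\infty(\Gamma_0)$ can change both $\dw(\xi^*_n)$ and $\dws(\xi^*_n)$ in ways you do not control. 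You write that you ``expect this to be handled by a disjointification / gliding hump step in the spirit of Rosenthal's lemma'', but nothing is carried out, and it is not evident how Rosenthal's lemma by itself lets you compare $\sup_{z^{**}\in B_{E^{**}}}$ of the oscillations of $z^{**}(\xi^*_n)$ with quantities living over a countable coordinate set, let alone with the sharp constant $1$. The alternative you float, working in $C(\beta\Gamma_d)$ with clopen $\{-1,1\}$-valued witnesses, is closer to a viable line, but it too is only a hint; extracting constant $1$ from it amounts to redoing the measure-theoretic core of Kalenda's argument rather than bypassing it.

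The paper does not attempt any reduction to the countable theorem. It observes instead that Kalenda's proof of $\ienv(B_{\ell_\infty})=B_{\ell_\infty^{**}}$ (his Example~4.1, built on his Propositions~4.2--4.3 and Lemmata~4.4--4.6) nowhere uses countability of the index set in an essential way, and simply reruns that argument with $\mathbb N$ replaced by $\Gamma$ throughout. This sidesteps the reduction entirely. If you want to pursue your route, you must actually execute the step you labelled ``I expect'': either show in full how the purely finitely additive parts are tamed so that both $\dw$ and $\dws$ are preserved (with constant $1$) under passage to some countable $\Gamma_0$, or argue directly on $ba(\Gamma)$, at which point you are essentially reproducing the paper's approach.
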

\begin{proof}
The proof of \cite[Example 4.1]{Kalenda1} works here as well. It suffices to replace $\N$ by $\Gamma$ in the right places. Lemmata 4.4 and 4.5 remain unchanged. We prove Lemma 4.6 for sequences of measures on $\Gamma$ by substituting $\N$ with $\Gamma$ wisely. Then we use it to prove Propositions 4.3 and 4.2 for measures on $\Gamma$ just as in the original proof.
\end{proof}

\begin{theorem}
\label{gen-q-groth}
The space $\ell_{\infty}(\Gamma)$ is $1$-Grothendieck for each set $\Gamma$.
\end{theorem}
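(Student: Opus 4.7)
The plan is to observe that this is essentially immediate from combining the two preceding results in the paper: Proposition \ref{groth-ienv} and Theorem \ref{gen-kal}. Proposition \ref{groth-ienv} characterizes $c$-Grothendieck spaces by the inclusion $\ienv(B_X) \supset \frac{1}{c} B_{X^{**}}$, and Theorem \ref{gen-kal} identifies the (I)-envelope of the unit ball of $E = \ell_\infty(\Gamma)$ as the entire bidual unit ball $B_{E^{**}}$.

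Concretely, I would set $E = \ell_\infty(\Gamma)$ and apply Theorem \ref{gen-kal} to obtain $\ienv(B_E) = B_{E^{**}}$. In particular $\ienv(B_E) \supset B_{E^{**}} = \frac{1}{1} B_{E^{**}}$, so the criterion in Proposition \ref{groth-ienv} is satisfied with constant $c=1$. Hence $E$ is $1$-Grothendieck.

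There is no genuine obstacle at this stage; all of the work has already been done. The qualitative content (that $\ienv(B_E)$ fills the entire bidual ball) is carried by Theorem \ref{gen-kal}, which in turn reduces via the sketch in its proof to the original argument of Kalenda with $\N$ replaced by $\Gamma$ in the supporting lemmas on sequences of measures. The translation from (I)-envelopes to the quantitative Grothendieck property is then handled by Proposition \ref{groth-ienv}. Accordingly, the proof of Theorem \ref{gen-q-groth} should be only a sentence or two invoking these two results.
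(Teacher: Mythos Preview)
Your proposal is correct and matches the paper's proof exactly: the paper's argument is the single sentence ``It follows from Theorem \ref{gen-kal} and Proposition \ref{groth-ienv},'' which is precisely the combination you describe.
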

\begin{proof}
It follows from Theorem \ref{gen-kal} and Proposition \ref{groth-ienv}.
\end{proof}

\begin{corollary}
\label{Linftymu}
Let $\mu$ be a $\sigma$-finite measure on a measurable space $X$. Then $L^\infty(\mu)$ is 1-Grothendieck.
\end{corollary}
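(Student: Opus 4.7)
The plan is to reduce Corollary \ref{Linftymu} to Theorem \ref{gen-q-groth} via Lemma \ref{subspace}, by exhibiting $L^\infty(\mu)$ as a quotient of some $\ell_\infty(\Gamma)$. The $\sigma$-finiteness of $\mu$ will enter through the duality $L^\infty(\mu)=L^1(\mu)^*$.

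First I would set $\Gamma=B_{L^1(\mu)}$ and consider the map $j\colon L^\infty(\mu)\to\ell_\infty(\Gamma)$ given by $j(f)(g)=\int fg\,d\mu$. By the $\sigma$-finite duality, $\|f\|_\infty=\sup_{g\in\Gamma}|\int fg\,d\mu|$, so $j$ is an isometric embedding onto a closed subspace of $\ell_\infty(\Gamma)$.

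To convert this embedding into a quotient going the other way, I would invoke that $L^\infty(\mu)$ is a commutative von Neumann algebra, hence isometrically isomorphic to $C(K)$ for some extremally disconnected compact Hausdorff space $K$ (Gelfand--Naimark, or equivalently Kakutani's representation of abstract $M$-spaces with unit). By the Goodner--Nachbin--Kelley theorem, such a $C(K)$ is $1$-injective, so there exists a norm-one projection $P\colon\ell_\infty(\Gamma)\to j(L^\infty(\mu))$. The operator $q=j^{-1}\circ P\colon\ell_\infty(\Gamma)\to L^\infty(\mu)$ is then norm-one and surjective, and because $P$ fixes $j(L^\infty(\mu))$ pointwise one gets $q(B_{\ell_\infty(\Gamma)})=B_{L^\infty(\mu)}$, i.e.\ $q$ is a metric quotient map.

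The conclusion follows at once: Theorem \ref{gen-q-groth} gives that $\ell_\infty(\Gamma)$ is $1$-Grothendieck, and Lemma \ref{subspace} transfers this property to the quotient $L^\infty(\mu)$. The only nonroutine ingredient, and thus the main obstacle to an elementary write-up, is the $1$-injectivity of $C(K)$ for extremally disconnected $K$, which has to be quoted as an external classical input; everything else is bookkeeping.
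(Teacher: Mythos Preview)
Your proposal is correct and follows essentially the same route as the paper: both arguments use that $L^\infty(\mu)$ is $1$-injective (the paper cites this directly, while you derive it via the $C(K)$ representation for extremally disconnected $K$ and Goodner--Nachbin--Kelley), hence $1$-complemented in some $\ell_\infty(\Gamma)$, and then apply Theorem~\ref{gen-q-groth} together with Lemma~\ref{subspace}. The only difference is the level of detail---you spell out the embedding and the passage from projection to metric quotient, whereas the paper compresses this into a single citation.
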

\begin{proof}
If $\mu$ is $\sigma$-finite, then $L^{\infty}(\mu)$ is $1\text{-}$injective (see for instance \cite[(5.91)]{FHHMZ}) and thus $1\text{-}$complemented in $\ell_\infty(\Gamma)$ for some set $\Gamma$, which is a $1\text{-}$Grothendieck space by Theorem \ref{gen-q-groth}. By Lemma \ref{subspace}, $L^\infty(\mu)$ is $1\text{-}$Grothendieck.
\end{proof}

The space in Corollary \ref{Linftymu} is $1\text{-}$complemented in $\ell_\infty(\Gamma)$.
In fact, not only $1\text{-}$complemented subspaces but all quotients of $\ell_\infty(\Gamma)$ are $1\text{-}$Grothendieck.

\begin{corollary}
Let $\Gamma$ be an arbitrary set. Each quotient of the space $\ell_\infty(\Gamma)$ is $1\text{-}$Grothendieck.
\end{corollary}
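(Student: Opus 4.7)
The plan is essentially to combine the two tools already established in the paper: Theorem \ref{gen-q-groth}, which states that $\ell_\infty(\Gamma)$ itself is $1$-Grothendieck for every set $\Gamma$, and Lemma \ref{subspace}, which transfers the $c$-Grothendieck property from a space to any of its quotients while preserving the constant $c$.

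Concretely, I would argue as follows. Let $Y$ be any quotient of $\ell_\infty(\Gamma)$, witnessed by a quotient map $q\colon\ell_\infty(\Gamma)\to Y$. By Theorem \ref{gen-q-groth}, $\ell_\infty(\Gamma)$ is $1$-Grothendieck, i.e.\ it satisfies $\dw(x^*_n)\leq\dws(x^*_n)$ for every bounded sequence $(x^*_n)$ in its dual. Applying Lemma \ref{subspace} with $X=\ell_\infty(\Gamma)$ and constant $c=1$, we conclude that $Y$ is $1$-Grothendieck.

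Since both ingredients are already proved in the paper and their combination is immediate, there is no real obstacle here; the corollary is just a packaging of Theorem \ref{gen-q-groth} and Lemma \ref{subspace}. The only thing worth emphasizing in the write-up is that Lemma \ref{subspace} preserves the optimal constant $c=1$, so we obtain $1$-Grothendieck (not merely $c$-Grothendieck for some $c\geq 1$), and that this strengthens Corollary \ref{Linftymu} by dropping the hypothesis that $L^\infty(\mu)$ be $1$-complemented in an $\ell_\infty(\Gamma)$.
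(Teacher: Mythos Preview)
Your proposal is correct and matches the paper's own proof essentially verbatim: the paper simply states that the corollary is a consequence of Theorem~\ref{gen-q-groth} and Lemma~\ref{subspace}, which is exactly your argument.
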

\begin{proof}
It is a consequence of Theorem \ref{gen-q-groth} and Lemma \ref{subspace}.
\end{proof}

Let us remark finally that we do not know whether the other spaces with the Grothendieck property mentioned in the introduction enjoy the quantitative version as well.

\subsection*{Acknowledgement} The author would like to thank Ond\v rej Kalenda for suggesting this research topic and for valuable discussions and comments.

\bibliography{qgrothendieck}
\bibliographystyle{plain}

\end{document}